\newtheorem{theorem}{Theorem}[section]
\newtheorem{definition}[theorem]{Definition}
\newtheorem{lemma} [theorem]{Lemma}
\newtheorem{problem}[theorem]{Problem}
\newtheorem{remark}[theorem]{Remark}
\title{\textbf{\sc Further Studies on the Sparing Number of Graphs}}
\author{{\bf N K Sudev} $^{{1},{\ast}}$ and {\bf K A Germina $^{2}$}
\\ \\
$^{1}${\small Department of Mathematics}\\ {\small Vidya Academy of Science \& Technology} \\ {\small  Thalakkottukara, Thrissur - 680501, Kerala, India.}\\ {\small email: {\em sudevnk@gmail.com}}
\\ \vspace{0.3cm}
$^{\ast}$ {\small Corresponding author.}
\\
$^{2}${\small Department of Mathematics} \\ {\small School of Mathematical \& Physical Sciences} \\ {\small Central University of Kerala, Kasaragod - 671316, Kerala, India.}\\ {\small email: {\em srgerminaka@gmail.com}}
}
\date{}
\begin{document}
\maketitle

\begin{abstract}
Let $\mathbb{N}_0$ denote the set of all non-negative integers and $\mathcal{P}(\mathbb{N}_0)$ be its power set. An integer additive set-indexer is an injective function $f:V(G)\to \mathcal{P}(\mathbb{N}_0)$ such that the induced function $f^+:E(G) \to \mathcal{P}(\mathbb{N}_0)$ defined by $f^+ (uv) = f(u)+ f(v)$ is also injective, where $f(u)+f(v)$ is the sum set of $f(u)$ and $f(v)$. If $f^+(uv)=k~\forall~uv\in E(G)$, then $f$ is said to be a $k$-uniform integer additive set-indexer. An integer additive set-indexer $f$ is said to be a weak integer additive set-indexer if $|f^+(uv)|=\max(|f(u)|,|f(v)|)~\forall ~ uv\in E(G)$. In this paper, we study the admissibility of weak integer additive set-indexer by certain graphs and graph operations.
\end{abstract}
\textbf{Key words}: Integer additive set-indexers, weak integer additive set-indexers,  weakly uniform integer additive set-indexers, mono-indexed elements of a graph, sparing number of a graph.

\vspace{0.04in}
\noindent \textbf{AMS Subject Classification: 05C78} 

\section{Introduction}

For all  terms and definitions, not defined specifically in this paper, we refer to \cite{FH}, \cite{BM} and \cite{ND} and for different graph classes, we further refer to \cite{BLS} and \cite{JAG}. Unless mentioned otherwise, all graphs considered here are simple, finite and have no isolated vertices.

 For two non-empty sets $A$ and $B$, the {\em sum set} of $A$ and $B$ is denoted by  $A+B$ and is defined by $A + B = \{a+b: a \in A, b \in B\}$. Using the concepts of sum sets, an integer additive set-indexer is defined as follows.

\begin{definition}\label{D2}{\rm
\cite{GA} Let $\mathbb{N}_0$ denote the set of all non-negative integers and $\mathcal{P}(\mathbb{N}_0)$ be its power set. An {\em integer additive set-indexer} (IASI, in short) is defined as an injective function $f:V(G)\to \mathcal{P}(\mathbb{N}_0)$ such that the induced function $f^+:E(G) \to \mathcal{P}(\mathbb{N}_0)$ defined by $f^+ (uv) = f(u)+ f(v)$ is also injective. A Graph which admits an IASI is called an {\em integer additive set-indexed graph} (IASI graph).}
\end{definition}

\begin{definition}\label{D3}{\rm
\cite{GS1} The cardinality of the labeling set of an element (vertex or edge) of a graph $G$ is called the {\em set-indexing number} of that element.}
\end{definition}

\begin{definition}\label{DU}{\rm
\cite{GA} An IASI is said to be {\em $k$-uniform} if $|f^+(e)| = k$ for all $e\in E(G)$. That is, a connected graph $G$ is said to have a $k$-uniform IASI if all of its edges have the same set-indexing number $k$. In particular, we say that a graph $G$ has an {\em arbitrarily $k$-uniform IASI} if $G$ has a $k$-uniform IASI  for every positive integer $k$.}
\end{definition}

The characteristics of a special type of $k$-uniform IASI graphs, called {\em weakly $k$-uniform IASI graphs}, has been studied in \cite{GS1}. A characterisation of weak IASI graphs has been done in \cite{GS3}. The following are the major notions and results established in these papers.

\begin{lemma}\label{L-Card}
\cite{GS1} For an integer additive set-indexer $f$ of a graph $G$, we have $\max(|f(u)|, |f(v)|)\le |f^+(uv)|= |f(u)+f(v)| \le |f(u)| |f(v)|$, where $u,v\in V(G)$.
\end{lemma}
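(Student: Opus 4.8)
Write $A=f(u)$ and $B=f(v)$. Since $f$ is a set-indexer, $A$ and $B$ are non-empty finite subsets of $\mathbb{N}_0$, and by the definition of $f^+$ we have $f^+(uv)=A+B$; hence the middle equality $|f^+(uv)|=|f(u)+f(v)|$ is immediate, and it remains only to establish the two inequalities $\max(|A|,|B|)\le |A+B|\le |A|\,|B|$. I would prove these as elementary facts about sum sets, independent of any graph-theoretic structure.

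For the upper bound, the plan is to exhibit $A+B$ as the image of a map from $A\times B$. Define $\sigma:A\times B\to\mathbb{N}_0$ by $\sigma(a,b)=a+b$. By the very definition $A+B=\{a+b:a\in A,\ b\in B\}$, so $A+B=\sigma(A\times B)$, and since the image of a function cannot have larger cardinality than its domain, $|A+B|\le |A\times B|=|A|\,|B|$. For the lower bound, assume without loss of generality that $|A|\ge |B|$, so that $\max(|A|,|B|)=|A|$. Fix an element $b_0\in B$ — possible precisely because $B$ is non-empty — and consider the map $\tau:A\to A+B$ given by $\tau(a)=a+b_0$. Since addition of integers is cancellative, $\tau$ is injective, whence $|A+B|\ge |\tau(A)|=|A|=\max(|A|,|B|)$. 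Combining the two estimates yields the assertion.

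I do not expect any real obstacle here: the statement is a cardinality bookkeeping fact about sum sets. The only point that merits a word of care is that the argument for the lower bound needs $B\neq\emptyset$ so that $b_0$ exists (and symmetrically $A\neq\emptyset$), which is guaranteed by the standing convention that a set-indexer assigns non-empty labels to vertices. It is also worth remarking that both bounds are sharp — the upper bound is attained, for instance, when $A$ and $B$ are chosen so that all the sums $a+b$ are distinct, and the lower bound is attained whenever $|B|=1$ — but these remarks are not needed for the proof itself.
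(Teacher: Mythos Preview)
Your proof is correct and is the standard elementary argument for these sum-set cardinality bounds. Note, however, that the paper does not actually supply a proof of this lemma: it is stated with a citation to \cite{GS1} and left unproved here, so there is no ``paper's own proof'' to compare against.
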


\begin{definition}{\rm
\cite{GS1} An IASI $f$ is said to be a {\em weak IASI} if $|f^+(uv)|=\max(|f(u)|,\\ |f(v)|)$ for all $u,v\in V(G)$. An IASI $f$ is said to be a {\em strong IASI} if $|f^+(uv)|= |f(u)+f(v)| \le |f(u)| |f(v)|$ for all $u,v\in V(G)$. A weak  IASI is said to be {\em weakly uniform IASI} if $|f^+(uv)|=k$, for all $u,v\in V(G)$ and for some positive integer $k$.  A graph which admits a weak IASI may be called a {\em weak IASI graph}.}
\end{definition}

The following result provides a necessary and sufficient condition for a graph $G$ to admit a weak IASI.

\begin{lemma}
\cite{GS1} A graph $G$ admits a weak IASI if and only if at least one end vertex of every edge of $G$ has a singleton set-label.
\end{lemma}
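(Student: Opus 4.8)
The plan is to prove both implications directly, using Lemma~\ref{L-Card} together with the elementary fact that for any two finite non-empty subsets $A,B$ of $\mathbb{N}_0$ one has $|A+B|\ge |A|+|B|-1$. This inequality I would dispatch in a single line: writing $A=\{a_1<a_2<\cdots<a_m\}$ and $B=\{b_1<b_2<\cdots<b_n\}$, the strictly increasing chain $a_1+b_1<a_1+b_2<\cdots<a_1+b_n<a_2+b_n<\cdots<a_m+b_n$ exhibits $m+n-1$ distinct elements of $A+B$.

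For the forward implication, suppose $f$ is a weak IASI of $G$ and let $uv\in E(G)$. By definition $|f^+(uv)|=\max(|f(u)|,|f(v)|)$, while the inequality above gives $|f^+(uv)|=|f(u)+f(v)|\ge |f(u)|+|f(v)|-1$. Combining these, $\max(|f(u)|,|f(v)|)\ge |f(u)|+|f(v)|-1$, which rearranges to $\min(|f(u)|,|f(v)|)\le 1$. Since a set-indexer assigns only non-empty subsets of $\mathbb{N}_0$, we must have $\min(|f(u)|,|f(v)|)=1$; that is, at least one of $u,v$ carries a singleton set-label.

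For the converse, suppose $f$ is an IASI of $G$ under which every edge has an end vertex with singleton set-label. Fix $uv\in E(G)$ and assume, without loss of generality, that $f(v)=\{b\}$ is a singleton. Then $f^+(uv)=f(u)+\{b\}=\{a+b:a\in f(u)\}$ is merely a translate of $f(u)$, so $|f^+(uv)|=|f(u)|=\max(|f(u)|,|f(v)|)$ because $|f(v)|=1\le |f(u)|$. As $uv$ was arbitrary and $f^+$ is already injective (being an IASI), $f$ is a weak IASI, and hence $G$ admits a weak IASI.

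I do not expect a serious obstacle here; the only mildly delicate points are making explicit that it is precisely the additive lower bound $|A+B|\ge |A|+|B|-1$ that forces a singleton label, and invoking the non-emptiness convention for set-labels so that the conclusion $\min(|f(u)|,|f(v)|)\le 1$ upgrades to $\min(|f(u)|,|f(v)|)=1$.
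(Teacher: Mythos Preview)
Your argument is correct in both directions: the additive lower bound $|A+B|\ge |A|+|B|-1$ exactly forces $\min(|f(u)|,|f(v)|)\le 1$ from the weak-IASI equality, and the translate observation handles the converse cleanly. There is nothing to compare, however, because the paper does not supply its own proof of this lemma; it is quoted from \cite{GS1} and stated without argument. Your write-up would serve as a self-contained justification where the paper offers none.
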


\begin{definition}{\rm
\cite{GS3} An element (a vertex or an edge) of graph which has the set-indexing number $1$ is called a {\em mono-indexed element} of that graph. The {\em sparing number} of a graph $G$ is defined to be the minimum number of mono-indexed edges required for $G$ to admit a weak IASI and is denoted by $\varphi(G)$.}
\end{definition}

\begin{theorem}\label{T-WSG}
\cite{GS3} If a graph $G$ is a weak (or weakly uniform) IASI graph, then any subgraph $H$ of $G$ is also a weak (or weakly uniform) IASI graph.
\end{theorem}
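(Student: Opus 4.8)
The plan is to prove the statement by showing that the restriction of a weak IASI to a subgraph remains a weak IASI, which is almost immediate from the characterisation given just above. First I would let $f:V(G)\to\mathcal{P}(\mathbb{N}_0)$ be a weak IASI of $G$ and let $H$ be any subgraph of $G$, so that $V(H)\subseteq V(G)$ and $E(H)\subseteq E(G)$. Define $g=f|_{V(H)}$, the restriction of $f$ to the vertex set of $H$. Since $f$ is injective on $V(G)$, its restriction $g$ is injective on $V(H)$; I must then check that the induced edge function $g^+$ is injective on $E(H)$, but this is clear because for $uv\in E(H)\subseteq E(G)$ we have $g^+(uv)=g(u)+g(v)=f(u)+f(v)=f^+(uv)$, and $f^+$ is injective on $E(G)\supseteq E(H)$. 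Hence $g$ is an IASI of $H$.

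Next I would verify the weakness condition. By the characterisation lemma recalled in the excerpt, $f$ being a weak IASI of $G$ means that at least one end vertex of every edge of $G$ has a singleton set-label under $f$. Since every edge $uv$ of $H$ is also an edge of $G$, at least one of $u,v$ has $|f(u)|=1$ or $|f(v)|=1$, and as $g$ agrees with $f$ on $V(H)$, the same holds under $g$. By the same characterisation (applied now to $H$), $g$ is a weak IASI of $H$. Equivalently, one can argue directly using Lemma~\ref{L-Card}: for each edge $uv$ of $H$, $|g^+(uv)|=|f^+(uv)|=\max(|f(u)|,|f(v)|)=\max(|g(u)|,|g(v)|)$, which is exactly the defining equality for a weak IASI.

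For the weakly uniform case, I would additionally observe that if $f$ is weakly uniform with $|f^+(e)|=k$ for all $e\in E(G)$, then for every $e\in E(H)\subseteq E(G)$ we still have $|g^+(e)|=|f^+(e)|=k$, so $g$ is weakly uniform on $H$ as well. There is essentially no obstacle here: the only point requiring a moment's care is the degenerate situation where $H$ has isolated vertices (so that some edges of $G$ are dropped) — but this only removes constraints, never adds them, so the conclusion is unaffected, and indeed the empty-edge case holds vacuously. Thus the proof reduces entirely to the monotonicity of the two defining conditions under passing to subgraphs.
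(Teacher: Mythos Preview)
Your argument is correct: restricting a weak IASI $f$ to $V(H)$ preserves injectivity of both the vertex labelling and the induced edge labelling, and the weakness condition (equivalently, the singleton-endpoint characterisation) is inherited edge by edge since $E(H)\subseteq E(G)$; the weakly uniform case follows in the same way. Note, however, that the paper does not actually supply a proof of this theorem here --- it is quoted from \cite{GS3} as background --- so there is no in-paper argument to compare against. Your restriction argument is exactly the standard one and is what one would expect the cited source to contain.
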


\begin{theorem}\label{T-SB1}
\cite{GS3}  If a connected graph $G$ admits a weak IASI, then $G$ is bipartite or $G$ has at least one mono-indexed edge. 
\end{theorem}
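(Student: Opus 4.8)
The plan is to prove the contrapositive-flavoured statement directly: assume $G$ admits a weak IASI $f$ and that $G$ has \emph{no} mono-indexed edge, and then exhibit an explicit bipartition of $G$ read off from the set-indexing numbers of its vertices. The guiding idea is that, for a weak IASI, the set-indexing number of an edge is completely determined by those of its two end vertices, so the absence of mono-indexed edges forces a strong structural restriction.

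First I would invoke the characterisation recalled above, namely that a graph admits a weak IASI if and only if at least one end vertex of every edge has a singleton set-label. Using this, partition $V(G)$ into $V_1=\{v\in V(G):|f(v)|=1\}$ and $V_2=V(G)\setminus V_1=\{v\in V(G):|f(v)|>1\}$. The characterisation immediately gives that $V_2$ is an independent set, i.e.\ no edge of $G$ has both end vertices in $V_2$.

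The key step is the observation that, by Lemma \ref{L-Card} (equivalently, by the definition of a weak IASI), $|f^+(uv)|=\max(|f(u)|,|f(v)|)$ for every edge $uv$, so $uv$ is mono-indexed if and only if $|f(u)|=|f(v)|=1$, that is, if and only if both $u$ and $v$ lie in $V_1$. Since we are assuming $G$ has no mono-indexed edge, no edge can have both ends in $V_1$. Combined with the independence of $V_2$, this means every edge of $G$ has exactly one end in $V_1$ and one end in $V_2$; hence $(V_1,V_2)$ is a bipartition of $G$ and $G$ is bipartite. I would close by dismissing the degenerate cases: if either $V_1$ or $V_2$ is empty then $G$ has no edges, which cannot happen since $G$ is connected with no isolated vertices (unless $G$ is a single vertex, which is trivially bipartite).

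The only non-routine point — the "hard part" — is recognising that for a weak IASI the property "$uv$ is a mono-indexed edge" is equivalent to "both end vertices of $uv$ are mono-indexed vertices," which is exactly what the weak IASI cardinality formula of Lemma \ref{L-Card} delivers. Once that translation is in place, the bipartition is immediate, and, as the argument shows, connectedness of $G$ is not actually needed beyond ruling out the trivial empty-graph case.
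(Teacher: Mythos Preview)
Your argument is correct. The bipartition $(V_1,V_2)$ according to whether $|f(v)|=1$ or $|f(v)|>1$, together with the observation that under a weak IASI an edge is mono-indexed precisely when both its ends are mono-indexed, cleanly forces both parts to be independent sets when no mono-indexed edge exists. The handling of degenerate cases is also fine.

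However, there is nothing to compare against here: in this paper Theorem~\ref{T-SB1} is merely quoted from \cite{GS3} as background and is not given a proof. So your write-up stands on its own as a valid proof of the cited result, but no ``paper's own proof'' is present for comparison.
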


From the above theorem, it is clear that all paths, trees and even cycles admit a weak IASI. We observe that the sparing number of bipartite graphs is $0$.

\begin{theorem}\label{T-WKN}
\cite{GS3}  The complete graph $K_n$ admits a weak IASI if and only if the number of edges of $K_n$ that have set-indexing number $1$ is $\frac{1}{2}(n-1)(n-2)$. 
\end{theorem}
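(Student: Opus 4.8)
The plan is to read the statement through the lens of the sparing number. Since $K_n$ trivially admits a weak IASI (label every vertex with a distinct singleton), the content of the theorem is that the minimum number of mono-indexed edges over all weak IASIs of $K_n$ equals $\frac{1}{2}(n-1)(n-2)$; that is, $\varphi(K_n)=\frac{1}{2}(n-1)(n-2)$. So I would prove two inequalities: every weak IASI of $K_n$ has at least this many mono-indexed edges, and some weak IASI attains this number.

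For the lower bound, I would invoke the characterisation of weak IASI graphs stated above: every edge of $K_n$ must have a mono-indexed end vertex. Because $K_n$ is complete, the set of vertices whose set-indexing number exceeds $1$ is an independent set, hence contains at most one vertex; so at least $n-1$ of the $n$ vertices are mono-indexed. The sum set of two singletons is again a singleton, so every edge joining two mono-indexed vertices is mono-indexed, and the (at least) $n-1$ mono-indexed vertices span a copy of $K_{n-1}$ with $\binom{n-1}{2}=\frac{1}{2}(n-1)(n-2)$ edges. Hence any weak IASI of $K_n$ leaves at least $\frac{1}{2}(n-1)(n-2)$ edges mono-indexed.

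For tightness I would construct an explicit weak IASI $f$ having exactly one non-mono-indexed vertex. Label $v_1,\dots,v_{n-1}$ by singletons drawn from a Sidon set, say $f(v_i)=\{2^{i-1}\}$, so that the pairwise sums $2^{i-1}+2^{j-1}$ are all distinct, and label $v_n$ by the two-element set $f(v_n)=\{0,M\}$ for a large integer $M$. Then $f$ is injective on vertices; each edge $v_iv_j$ with $i,j\le n-1$ is mono-indexed with distinct label $\{2^{i-1}+2^{j-1}\}$; each edge $v_nv_i$ has set-indexing number $2=\max(|f(v_n)|,|f(v_i)|)$ with label $\{2^{i-1},\,2^{i-1}+M\}$; and $f^+$ is injective, since the two-element edge-labels have cardinality $2$ (so they differ from every singleton label) and are mutually distinct. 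This $f$ is thus a weak IASI of $K_n$ with exactly $\frac{1}{2}(n-1)(n-2)$ mono-indexed edges.

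The main obstacle is the tightness half: one must keep $f^+$ injective even though only $n-1$ vertices carry singleton labels, and the naive choice of arbitrary distinct singletons fails because their pairwise sums collide (for instance $1+4=2+3$); replacing them by a Sidon set repairs this, and the large value of $M$ separates the two-element edge-labels cleanly. The lower bound, by contrast, follows at once by combining the completeness of $K_n$ with the weak IASI characterisation.
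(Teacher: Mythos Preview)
The paper does not actually prove Theorem~\ref{T-WKN}; it is quoted from \cite{GS3} as a background result, and the only elaboration offered is the remark immediately after it that ``at most one vertex (and hence at most $(n-1)$ edges) of $K_n$ can have a non-singleton set-label.'' There is therefore no in-paper proof to compare against. Your reading of the statement as the assertion $\varphi(K_n)=\tfrac{1}{2}(n-1)(n-2)$ is the intended one, and your argument is correct: the lower bound via ``non-singleton vertices form an independent set, hence at most one such vertex in $K_n$'' is exactly the content of the paper's follow-up remark, and your explicit Sidon-set construction cleanly supplies the tightness half that the paper leaves implicit. One small remark: your choice of powers of $2$ already forces distinct pairwise sums, and in fact any $M\ge 1$ (not just ``large'' $M$) suffices for injectivity of $f^+$, since two sets $\{2^{i-1},2^{i-1}+M\}$ and $\{2^{j-1},2^{j-1}+M\}$ can coincide only if $M=0$; but insisting on large $M$ does no harm.
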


We can also infer that a complete graph $K_n$ admits a weak IASI if and only if at most one vertex (and hence at most $(n-1)$ edges) of $K_n$ can have a non-singleton set-label.  

\begin{theorem}\label{T-NME}
\cite{GS3}  Let $C_n$ be a cycle of length $n$ which admits a weak IASI, for a positive integer $n$. Then, $C_n$ has an odd number of mono-indexed edges when it is an odd cycle and has even number of mono-indexed edges, when it is an even cycle. 
\end{theorem}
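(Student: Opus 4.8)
The plan is to translate the hypothesis ``$C_n$ admits a weak IASI'' into a statement about the vertices of $C_n$ and then count edges according to the types of their end vertices. First I would invoke the characterisation lemma: a weak IASI on $C_n$ exists precisely when at least one end vertex of every edge carries a singleton set-label. Call a vertex \emph{mono-indexed} if its set-indexing number is $1$, and \emph{non-mono-indexed} otherwise. Since for a weak IASI we have $|f^+(uv)|=\max(|f(u)|,|f(v)|)$, an edge $uv$ is mono-indexed if and only if \emph{both} $u$ and $v$ are mono-indexed vertices; equivalently, an edge fails to be mono-indexed exactly when it is incident with at least one non-mono-indexed vertex.

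The key structural observation is that the set of non-mono-indexed vertices of $C_n$ is independent: two adjacent non-mono-indexed vertices would produce an edge neither of whose ends has a singleton set-label, contradicting the weak IASI condition. Let $r$ be the number of non-mono-indexed vertices. Each of them has degree $2$ in $C_n$, and since they are pairwise non-adjacent, the $2r$ edges incident with them are pairwise distinct; conversely, every non-mono-indexed edge is incident with one of these vertices. Hence $C_n$ has exactly $2r$ non-mono-indexed edges.

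Consequently the number of mono-indexed edges of $C_n$ equals $n-2r$, which has the same parity as $n$. This gives the claim at once: $n-2r$ is odd when $C_n$ is an odd cycle and even when $C_n$ is an even cycle. I do not expect a real obstacle here; the only point needing a little care is the counting argument of the previous paragraph --- that distinct non-mono-indexed vertices contribute disjoint pairs of non-mono-indexed edges and that every non-mono-indexed edge is accounted for --- which is precisely where the independence of the non-mono-indexed vertex set is used. It is also worth recording the consistency check $r\le\lfloor n/2\rfloor$, so that $n-2r\ge 0$, as it must be.
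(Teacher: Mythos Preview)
Your argument is correct. The characterisation lemma forces the non-mono-indexed vertices of $C_n$ to form an independent set; with $r$ such vertices, independence guarantees that the $2r$ incident edges are distinct and account for precisely the non-mono-indexed edges, so the number of mono-indexed edges is $n-2r\equiv n\pmod 2$. The consistency check $r\le\lfloor n/2\rfloor$ and the edge-counting step are both handled cleanly.

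As for comparison: in the present paper Theorem~\ref{T-NME} is quoted from \cite{GS3} as a background result and is not proved here, so there is no in-paper argument to set yours against. Your parity-counting approach is the natural one and is self-contained from the characterisation lemma; it is in fact slightly more informative than the bare statement, since it yields the exact count $n-2r$ of mono-indexed edges in terms of the number $r$ of vertices receiving non-singleton labels.
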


\begin{theorem}\label{T-SNC}
\cite{GS3}  The sparing number of $C_n$ is $0$ if $n$ is an even number and is $1$ if $n$ is an odd integer. 
\end{theorem}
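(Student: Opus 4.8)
The plan is to handle the two parities separately, using Theorem~\ref{T-SB1} (equivalently Theorem~\ref{T-NME}) for the lower bounds and explicit labelings for the upper bounds. Since $\varphi(G)$ is a minimum over all weak IASIs, in each case it suffices to exhibit one weak IASI attaining the claimed number of mono-indexed edges and to rule out anything smaller.

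For even $n$, the cycle $C_n$ is bipartite, so by the observation following Theorem~\ref{T-SB1} its sparing number is $0$; concretely, I would label the vertices $v_1,v_2,\dots,v_n$ cyclically, assigning each odd-indexed vertex a distinct singleton set and each even-indexed vertex a distinct set of cardinality $2$, with the integers chosen so that the sum sets along the edges are pairwise distinct. Since $n$ is even, every edge joins an odd-indexed to an even-indexed vertex, so every edge has set-indexing number $2$, none is mono-indexed, and hence $\varphi(C_n)=0$.

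For odd $n$, the cycle $C_n$ is not bipartite, so Theorem~\ref{T-SB1} forces every weak IASI to have at least one mono-indexed edge, i.e.\ $\varphi(C_n)\ge 1$ (Theorem~\ref{T-NME} gives the same, as an odd count is at least one). It then remains to build a weak IASI with exactly one mono-indexed edge: label $v_1,v_3,\dots$ with distinct singletons and $v_2,v_4,\dots$ with distinct $2$-sets around the cycle. Because $n$ is odd, the two vertices $v_n$ and $v_1$ closing the cycle both get singletons, so $v_nv_1$ is the unique mono-indexed edge while every other edge, joining a singleton-labeled vertex to a $2$-set-labeled vertex, has set-indexing number $2$. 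After verifying injectivity of $f$ and of $f^+$, this gives $\varphi(C_n)\le 1$, so $\varphi(C_n)=1$.

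The only genuine work is the bookkeeping in the construction: one must choose the singletons and the $2$-element sets so that all vertex labels are distinct and, more delicately, so that the induced edge sums $f(v_i)+f(v_{i+1})$ are pairwise distinct, including the sum on the closing edge. I expect this verification, rather than the structural argument, to be the main (though entirely routine) obstacle; a safe choice is to take the $2$-sets as translates of a fixed pair by widely separated integers and the singletons from an interleaved progression, or simply to note that the alternating labeling restricted to the path $v_1v_2\cdots v_n$ already carries a weak IASI and only the single edge $v_nv_1$ needs to be accounted for.
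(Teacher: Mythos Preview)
Your argument is correct and is the natural one: the even case follows immediately from bipartiteness (the remark after Theorem~\ref{T-SB1}), and in the odd case Theorem~\ref{T-SB1} (or Theorem~\ref{T-NME}) gives $\varphi(C_n)\ge 1$, while your alternating singleton/2-set labeling, with a single mono-indexed closing edge, gives $\varphi(C_n)\le 1$. The injectivity bookkeeping you flag is indeed routine; choosing rapidly growing integers for the labels handles it.

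Note, however, that in this paper Theorem~\ref{T-SNC} is not proved at all: it is quoted from \cite{GS3} as a known result in the preliminaries, with no argument given here. So there is no ``paper's own proof'' to compare against. Your write-up would serve perfectly well as the standard proof one would expect to find in \cite{GS3}.
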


The admissibility of weak IASIs by the union of weak IASI graphs has been established in \cite{GS4} and hence discussed the following major results.

\begin{theorem}\label{T-WUG}
\cite{GS4} Let $G_1$ and $G_2$ be two cycles. Then, $G_1\cup C_2$ admits a weak IASI if and only if both $G_1$ and $G_2$ are weak IASI graphs.
\end{theorem}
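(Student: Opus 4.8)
The plan is to prove the two implications separately. The forward implication is immediate: if $G_1\cup G_2$ admits a weak IASI $f$, then since each $G_i$ is a subgraph of $G_1\cup G_2$, Theorem \ref{T-WSG} shows that the restriction of $f$ to $V(G_i)$ is a weak IASI of $G_i$, so both $G_1$ and $G_2$ are weak IASI graphs. No construction is needed in this direction.

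For the converse, assume both cycles admit weak IASIs. Recalling the characterisation that a graph admits a weak IASI precisely when every edge has a mono-indexed end vertex, it suffices to produce an injective $f:V(G_1\cup G_2)\to\mathcal P(\mathbb N_0)$ with $f^+$ injective and with every edge of $G_1\cup G_2$ mono-indexed at one end. I would argue by cases on how the two cycles meet: (i) $G_1$ and $G_2$ are vertex-disjoint; (ii) $G_1\cap G_2$ is a single vertex; (iii) $G_1\cap G_2$ is a path, or a union of such paths. In case (i) take the given weak IASIs $f_1,f_2$ of $G_1,G_2$ and replace every set-label used on $G_2$ by its translate by a large integer $N$; translation fixes all sum-set cardinalities, so weakness is preserved, and for $N$ large the images of $f_1$ and $f_2$ become disjoint, which restores injectivity of $f$ and of $f^+$.

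In cases (ii) and (iii) a shared vertex is constrained by both cycles, so I would build the labelling on $H:=G_1\cup G_2$ directly: choose $S\subseteq V(H)$ to receive singleton labels, where $S$ meets every edge of $H$ and, within each $G_i$, the edges covered by $S$ have the parity forced by Theorem \ref{T-NME} (an odd number when $G_i$ is an odd cycle, an even number — possibly zero — when $G_i$ is an even cycle); then give distinct singletons to the vertices of $S$ and distinct two-element sets to the remaining vertices, with all the integers involved drawn from a Sidon set so that the induced edge sums are automatically pairwise distinct. The main obstacle is case (iii) when the overlap is a path of several edges: an interior vertex of the shared path has degree four in $H$, so its status as mono-indexed or not is felt by both cycles simultaneously, and one must verify that the parity requirements of Theorem \ref{T-NME} for $G_1$ and for $G_2$ can be met at once. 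I would resolve this by fixing the labels along the shared path first — alternating singletons and non-singletons, and inserting one extra mono-indexed edge only if its length makes that parity-necessary — and then extending along the remaining arc of $G_1$ and the remaining arc of $G_2$ independently, using the slack in Theorem \ref{T-SNC} that an odd cycle tolerates exactly one mono-indexed edge while an even cycle tolerates any even number. Once the combinatorial pattern of singletons and non-singletons is fixed, injectivity of $f$ and $f^+$ is secured exactly as in case (i) by realising the non-singleton labels inside a Sidon set, so the only genuinely new work is the finite, case-based bookkeeping of the overlap.
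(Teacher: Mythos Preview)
The paper does not actually prove this theorem: it is quoted from \cite{GS4}, and the only indication of the argument is the single sentence following the statement, ``It is proved that the weak IASIs of the two graphs together constitute a weak IASI for their union.'' So there is no detailed proof in the paper to compare against; the intended argument is simply to glue $f_1$ and $f_2$.

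Your forward direction via Theorem \ref{T-WSG} is correct and is exactly the standard move. Your converse, however, is substantially over-engineered because you are implicitly solving a harder problem than the one stated. The theorem only asks for \emph{existence} of a weak IASI on $G_1\cup G_2$, not for one realising any particular sparing number. Since every graph admits a $1$-uniform IASI (label all vertices by distinct singletons drawn from a Sidon set; then $f$ and $f^+$ are injective and $|f^+(uv)|=1=\max(|f(u)|,|f(v)|)$ for every edge), the converse is immediate --- indeed, both sides of the biconditional are always true. Your invocation of Theorem \ref{T-NME} and the parity bookkeeping along the shared path are the right tools for computing $\varphi(G_1\cup G_2)$, but they are irrelevant to bare existence; the phrase ``an odd cycle tolerates exactly one mono-indexed edge'' is also not what Theorem \ref{T-SNC} says (it gives the minimum, not a maximum). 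Finally, your case (iii) mentions ``a union of such paths'' but the argument you sketch treats only a single common path, so even on its own terms the converse would need more work --- though, again, all of this is moot once you observe that a $1$-uniform labelling suffices.
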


It is proved that the weak IASIs of the two graphs together constitute a weak IASI for their union. 

\begin{theorem}\label{T-SNUG}
\cite{GS4} Let $G_1$ and $G_2$ be two weak IASI graphs. Then, $\varphi(G_1\cup G_2) = \varphi(G_1)+\varphi(G_2)-\varphi(G_1\cap G_2)$.  More over, if $G_1$ and $G_2$ are edge disjoint graphs, then $\varphi(G_1\cup G_2) = \varphi(G_1)+\varphi(G_2)$.
\end{theorem}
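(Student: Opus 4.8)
The plan is to derive the equality from two inequalities, using as the central observation that whether an edge of a weak IASI graph is mono-indexed depends only on the set-labels of its two end vertices. Indeed, by Lemma~\ref{L-Card}, $|f^+(uv)|=1$ forces $|f(u)|=|f(v)|=1$, and conversely two singleton set-labels produce a singleton sum set; so an edge is mono-indexed precisely when both of its ends are mono-indexed vertices. Consequently, for any weak IASI $f$ of $G_1\cup G_2$, writing $M_f(X)$ for the set of mono-indexed edges of a subgraph $X$ under $f$, we have $M_f(G_1\cup G_2)=M_f(G_1)\cup M_f(G_2)$ and $M_f(G_1)\cap M_f(G_2)=M_f(G_1\cap G_2)$, whence $|M_f(G_1\cup G_2)|=|M_f(G_1)|+|M_f(G_2)|-|M_f(G_1\cap G_2)|$. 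I would also record the relabelling freedom available: the mono-indexed vertices of a weak IASI of a graph $G$ may be taken to be any independent set $I$ of $G$, the remaining vertices receiving suitably generic distinct non-singleton set-labels so that $f$ and $f^+$ stay injective, so that minimising $\varphi(G)$ amounts to minimising $|E(G-I)|$ over independent sets $I$.

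For the upper bound, I would first invoke the union result of \cite{GS4} (in the spirit of Theorem~\ref{T-WUG}) to know that $G_1\cup G_2$ admits a weak IASI at all. Then, with $H=G_1\cap G_2$, fix an optimal weak IASI $g$ of $H$, so $|M_g(H)|=\varphi(H)$, and extend $g$ to a weak IASI $f_i$ of $G_i$ that is optimal among all weak IASIs of $G_i$ restricting to $g$ on $H$. Since $f_1$ and $f_2$ agree on $H$ they glue to a weak IASI $f$ of $G_1\cup G_2$, and the displayed identity gives $\varphi(G_1\cup G_2)\le |M_{f_1}(G_1)|+|M_{f_2}(G_2)|-\varphi(H)$. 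It then remains to check that each $f_i$ may be taken with $|M_{f_i}(G_i)|=\varphi(G_i)$, i.e.\ that pinning down the common part optimally costs nothing on the rest of $G_i$; in the independent-set language this says that some independent set $I_i$ of $G_i$ minimising $|E(G_i-I_i)|$ has $I_i\cap V(H)$ minimising $|E(H-(I_i\cap V(H)))|$.

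For the lower bound, take an optimal weak IASI $f$ of $G_1\cup G_2$. By Theorem~\ref{T-WSG} its restrictions to $G_1$, $G_2$ and $H$ are weak IASIs, so $|M_f(G_1)|\ge\varphi(G_1)$ and $|M_f(G_2)|\ge\varphi(G_2)$; combined with the identity above, $\varphi(G_1\cup G_2)=|M_f(G_1)|+|M_f(G_2)|-|M_f(H)|\ge\varphi(G_1)+\varphi(G_2)-|M_f(H)|$, and one finishes once $f$ is arranged (or shown replaceable by one) with $|M_f(H)|=\varphi(H)$. The edge-disjoint statement is the clean special case: then $E(H)=\emptyset$, so $\varphi(H)=0$, the sets $M_f(G_1)$ and $M_f(G_2)$ are automatically disjoint, the lower bound reads $\varphi(G_1\cup G_2)=|M_f(G_1)|+|M_f(G_2)|\ge\varphi(G_1)+\varphi(G_2)$, and the matching upper bound follows by gluing any two optimal weak IASIs after reconciling their values on the (edgeless) common vertex set.

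The main obstacle is exactly this reconciliation at the shared subgraph $H$: ensuring that optimal weak IASIs of $G_1$ and of $G_2$ can be chosen to agree on $H$ and, at the same time, to restrict to an optimal weak IASI of $H$, with no inflation of the mono-indexed edge count on either side. This is transparent when $G_1$ and $G_2$ are vertex-disjoint and, more to the point, in the edge-disjoint case $\varphi(H)=0$, where the two optimal structures can never clash on an edge; the heart of the general argument is to show the same compatibility persists whenever $G_1\cup G_2$ is a weak IASI graph, and that is where I expect the subtlety — and the dependence on the explicit constructions of \cite{GS4} — to lie.
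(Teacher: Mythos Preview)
This theorem is quoted from \cite{GS4} and carries no proof in the present paper, so there is nothing here to compare your argument against directly.

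On its own merits: your inclusion--exclusion identity $|M_f(G_1\cup G_2)|=|M_f(G_1)|+|M_f(G_2)|-|M_f(G_1\cap G_2)|$ for a \emph{fixed} weak IASI $f$ is correct, and you are right to isolate as the crux the question of whether optimal labellings of $G_1$, $G_2$ and $H=G_1\cap G_2$ can be chosen compatibly. But this is not a subtlety that can be ironed out by appealing to the constructions of \cite{GS4}; it fails outright, already in the edge-disjoint case. Take $G_1$ to be the path on vertices $a,b,c$ (edges $ab,bc$) and $G_2$ the single edge $ac$. Then $\varphi(G_1)=\varphi(G_2)=0$ and $G_1\cap G_2$ is edgeless, yet $G_1\cup G_2$ is a triangle with $\varphi(G_1\cup G_2)=1\neq 0+0$. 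The obstruction is precisely the one you flagged: the optimal labelling of $G_1$ wants $a$ and $c$ non-singleton, any optimal labelling of $G_2$ forces one of $a,c$ to be singleton, and no reconciliation on the shared vertices is cost-free.

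So the formula, read literally for arbitrary weak IASI graphs, is false; your instinct that the compatibility step hides the whole difficulty was exactly right, and no amount of relabelling freedom rescues it in general. What does survive is the inequality $\varphi(G_1\cup G_2)\le\varphi(G_1)+\varphi(G_2)-\varphi(G_1\cap G_2)$ in situations where compatible optima exist, and the use the present paper actually makes of Theorem~\ref{T-SNUG} (in the proof of Theorem~\ref{T-SNRSG}) is only the edge-disjoint direction applied to $H_1,H_2$ arising from a single ambient labelling, which your fixed-$f$ identity already delivers.
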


\section{New Results on Graph Joins}

\noindent The join of two graphs is defined as follows.
\begin{definition}{\rm
\cite{FH} Let $G_1(V_1,E_1)$ and $G_2(V_2,E_2)$ be two graphs. Then, their {\em join} (or {\em sum}), denoted by $G_1+G_2$, is the graph whose vertex set is $V_1\cup V_2$ and edge set is $E_1\cup E_2\cup E_{ij}$, where $E_{ij}=\{u_iv_j:u_i\in G_1,v_j\in G_2\}$. }
\end{definition}

Certain studies about the admissibility of weak IASIs by the join of graphs have been done in \cite{GS4}. In this section, we verify the admissibility of a weak IASI by certain graphs which are the joins of some other graphs.

A major result about the admissibility of weak IASI by graph joins, proved in \cite{GS4}, is the following.

\begin{theorem}\label{T-WG++}
\cite{GS4} Let $G_1,G_2,G_3,\ldots, G_n$ be weak IASI graphs. Then, the graph $\sum_{i=1}^{n}G_i$ is a weak IASI graph if and only if all given graphs $G_i$, except one, are $1$-uniform IASI graphs.
\end{theorem}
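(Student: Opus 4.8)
The plan is to prove both implications from the characterisation of weak IASI graphs, namely that a graph $H$ admits a weak IASI if and only if at least one end vertex of every edge of $H$ has a singleton set-label. Throughout, write $G=\sum_{i=1}^{n}G_i$ and recall that in $G$ every vertex of $G_i$ is adjacent to every vertex of $G_j$ whenever $i\ne j$.

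For the necessity direction, suppose $f$ is a weak IASI of $G$. The key claim is that at most one of the graphs $G_1,\dots,G_n$ can contain a vertex whose set-indexing number exceeds $1$. Indeed, if $G_i$ had a vertex $u$ with $|f(u)|\ge 2$ and $G_j$ (with $j\ne i$) had a vertex $v$ with $|f(v)|\ge 2$, then the edge $uv\in E_{ij}$ would have both end vertices non-singleton; by Lemma~\ref{L-Card} together with the elementary bound $|f(u)+f(v)|\ge |f(u)|+|f(v)|-1$ we get $|f^+(uv)|>\max(|f(u)|,|f(v)|)$, contradicting that $f$ is weak (equivalently, this contradicts the characterisation above directly). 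Hence, after relabelling the indices if necessary, every vertex of $G_2,\dots,G_n$ is mono-indexed, so each edge internal to $G_j$ for $j\ge 2$ has set-indexing number $1$; that is, $f$ restricts to a $1$-uniform IASI on each of $G_2,\dots,G_n$, which gives the stated condition.

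For the sufficiency direction, assume $G_2,\dots,G_n$ are $1$-uniform IASI graphs and $G_1$ is a weak IASI graph. I would build a weak IASI $f$ on $G$ by fixing a weak IASI $f_1$ of $G_1$ and assigning pairwise distinct singleton sets to the vertices of $G_2,\dots,G_n$. Then every edge internal to some $G_j$ ($j\ge 2$) is mono-indexed, every edge of $E_{ij}$ has a singleton end vertex, and every edge internal to $G_1$ keeps its set-indexing number from $f_1$; in all cases $|f^+(uv)|=\max(|f(u)|,|f(v)|)$, so $f$ is weak. It remains only to ensure that $f$ and $f^+$ are injective over all of $G$, which is arranged by placing the ground sets on well-separated integer scales (for instance replacing $f_1$ by an affine image $x\mapsto cx+d$ with $c,d$ large and drawing the singleton values for $G_2,\dots,G_n$ from a sufficiently spread-out set with distinct pairwise sums); such a relabelling does not disturb the weak property.

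The substantive step is the necessity direction: pinning down that no two of the joined graphs may simultaneously carry a non-singleton-labelled vertex. The main obstacle I anticipate is writing the injectivity argument in the sufficiency direction precisely yet briefly, since three families of edges — those internal to $G_1$, those internal to the mono-indexed $G_j$'s, and the cross edges in the sets $E_{ij}$ — must all be kept from colliding at once.
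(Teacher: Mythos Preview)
The paper does not contain its own proof of this theorem---it is quoted from \cite{GS4} and used as a black box---so there is nothing in the present paper to compare your argument against. Judged on its own, your proposal is correct and is the natural proof: the necessity direction is exactly the observation that, since in a weak IASI every edge must have a mono-indexed endpoint (Lemma~1.6), no two distinct summands $G_i,G_j$ can simultaneously carry a non-singleton-labelled vertex, and your sufficiency construction (weak IASI on one summand, fresh singletons on the rest) is the expected one.

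Two minor remarks. First, the inequality $|f(u)+f(v)|\ge |f(u)|+|f(v)|-1$ is correct but redundant; Lemma~1.6 already delivers the contradiction the moment both endpoints of a cross edge are non-singleton, so you can drop that sentence. Second, your own caveat about injectivity of $f$ and $f^+$ in the sufficiency direction is well placed: the affine-shift/Sidon idea works, but in a finished write-up you should make the separation of the three edge families (internal to $G_1$, internal to the mono-indexed summands, and cross edges) explicit---e.g.\ by putting the singleton values and the translated $f_1$-values in disjoint residue classes or disjoint integer intervals---rather than leaving it as a one-line sketch.
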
 

Invoking Theorem \ref{T-WG++}, we study about the sparing number of certain graph classes which are the joins of some weak IASI graphs. First, recall the definition of a fan graph.

\begin{definition}{\rm
\cite{JAG} The graph $P_n+\overline{K}_m$ is called an {\em $(m,n)$-fan graph} and is denoted by $F_{m,n}$.} 
\end{definition}

\noindent The following result estimates the sparing number of a fan graph $F_{m,n}$.

\begin{theorem}\label{T-Pn-mK1}
For two non-negative integers $m,n>1$, the sparing number of an $(m,n)$- fan graph $F_{m,n}=P_n+\overline{K}_m$ is $n$, the length of the path $P_n$.
\end{theorem}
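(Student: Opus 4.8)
The plan is to prove matching upper and lower bounds. Write $V(P_n)=\{u_1,\ldots,u_{n+1}\}$ with $u_iu_{i+1}\in E(P_n)$ for $1\le i\le n$, and $V(\overline{K}_m)=\{w_1,\ldots,w_m\}$, so that $E(F_{m,n})=E(P_n)\cup\{\,u_iw_j: 1\le i\le n+1,\ 1\le j\le m\,\}$. Since $P_n$ admits a $1$-uniform IASI and $\overline{K}_m$ is a weak IASI graph, Theorem \ref{T-WG++} already shows that $F_{m,n}$ admits a weak IASI; the point is to count the unavoidable mono-indexed edges.

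For the upper bound $\varphi(F_{m,n})\le n$, I would give all vertices $u_i$ distinct singleton set-labels (say $f(u_i)=\{i\}$) and give each $w_j$ a $2$-element set $f(w_j)$, choosing the $f(w_j)$ inside pairwise far-apart intervals of $\mathbb{N}_0$ --- far enough that translating any of them by an integer in $\{1,\ldots,n+1\}$ still leaves them pairwise disjoint and disjoint from the small integers used on the path. Then $f$ is injective; each path edge $u_iu_{i+1}$ gets the singleton $\{2i+1\}$, and these $n$ labels are distinct; and each join edge $u_iw_j$ gets $\{i\}+f(w_j)$, a $2$-element set, so (using Lemma \ref{L-Card}) the separation makes all of these distinct from one another and from the path-edge labels. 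Hence $f$ is a weak IASI whose only mono-indexed edges are the $n$ edges of $P_n$. Making the interval choice explicit is routine, and is essentially the only computation in the proof.

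For the lower bound $\varphi(F_{m,n})\ge n$, let $f$ be any weak IASI of $F_{m,n}$ and argue by cases. If every $u_i$ is mono-indexed, then each $f^+(u_iu_{i+1})=f(u_i)+f(u_{i+1})$ is a singleton, so all $n$ edges of $P_n$ are mono-indexed. Otherwise, some $u_{i_0}$ has $|f(u_{i_0})|\ge 2$; since $f$ is a weak IASI, every edge has a mono-indexed end vertex, so each edge $u_{i_0}w_j$ forces $w_j$ to be mono-indexed --- hence all of $w_1,\ldots,w_m$ are mono-indexed. The same fact applied to the path edges shows that the non-mono-indexed vertices among $u_1,\ldots,u_{n+1}$ form an independent set of $P_n$; as $P_n$ has $n+1$ vertices, such a set has size at most $\lceil\frac{n+1}{2}\rceil$, so at least $\lfloor\frac{n+1}{2}\rfloor$ of the $u_i$ are mono-indexed. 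For each such $u_i$ and each $w_j$, the edge $u_iw_j$ has both ends mono-indexed, so it is mono-indexed; this yields at least $m\lfloor\frac{n+1}{2}\rfloor$ mono-indexed join edges, and $m\ge 2$ gives $m\lfloor\frac{n+1}{2}\rfloor\ge 2\lfloor\frac{n+1}{2}\rfloor\ge n$. In both cases $f$ has at least $n$ mono-indexed edges, so together with the upper bound $\varphi(F_{m,n})=n$.

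The main obstacle is the second case of the lower bound: recognising that a single non-mono-indexed path vertex forces the whole of $\overline{K}_m$ to be mono-indexed, bounding the number of mono-indexed path vertices by the independence number of a path, and finally using the hypothesis $m\ge 2$, which is exactly what makes $m\lfloor\frac{n+1}{2}\rfloor\ge n$. (For $m=1$, i.e. the ordinary fan $P_n+K_1$, the same analysis would give a strictly smaller bound, so the hypothesis $m>1$ is essential.)
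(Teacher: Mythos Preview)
Your proof is correct and follows essentially the same two-case analysis as the paper's own proof: either all path vertices are mono-indexed (yielding $n$ mono-indexed path edges) or some path vertex is not, forcing all of $\overline{K}_m$ to be mono-indexed and giving at least $m\lfloor\tfrac{n+1}{2}\rfloor\ge n$ mono-indexed join edges. Your version is in fact more careful than the paper's --- you give an explicit construction for the upper bound and justify the count $m\lfloor\tfrac{n+1}{2}\rfloor$ via the independence number of $P_n$, whereas the paper simply asserts these counts and invokes Theorem~\ref{T-WG++}.
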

\begin{proof}
By Theorem \ref{T-WG++}, $F_{m,n}$ admits a weak IASI if and only if either $P_n$ or $\overline{K}_m$ is $1$-uniform. 

If $P_n$ is not $1$-uniform, then no vertex of $\overline{K}_m$ can have a non-singleton set-label. In this case, the number of mono-indexed edges is $m\lfloor \frac{n+1}{2} \rfloor$. 

If $P_n$ is $1$-uniform, since no two vertices in $\overline{K}_m$ are adjacent, all vertices of $\overline{K}_m$ can be labeled by non-singleton set-labels. Therefore, no edge between $P_n$ and $\overline{K}_m$ is mono-indexed in $G$. That is, in this case, the number of mono-indexed edges in $F_{m,n}$ is $n$.

Since $m>1$, we have $n<m\lfloor \frac{n+1}{2} \rfloor$. Hence, the sparing number of $F_{m,n}$ is $n$.
\end{proof}

\noindent The above theorem raises a natural question about the sparing number of a graph which is the join of a cycle and a trivial graph. Let us recall the definition of an $(m,n)$-cone.

\begin{definition}{\rm
\cite{JAG} The graph $C_n+\overline{K}_m$ is called an {\em $(m,n)$-cone}. }
\end{definition}

The following theorem establishes the sparing number of an $(m,n)$-cone $G=C_n+\overline{K}_m$.

\begin{theorem}\label{T-Cn-mK1}
For two non-negative integers $m,n>1$, the sparing number of an $(m,n)$-cone $C_n+\overline{K}_m$ is $n$.
\end{theorem}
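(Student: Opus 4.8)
The plan is to mimic the proof of Theorem~\ref{T-Pn-mK1}, replacing the path $P_n$ by the cycle $C_n$ and using the sparing number of cycles from Theorem~\ref{T-SNC}. By Theorem~\ref{T-WG++}, the cone $G=C_n+\overline{K}_m$ admits a weak IASI if and only if at least one of $C_n$ and $\overline{K}_m$ is $1$-uniform, so there are exactly two cases to examine and the answer is the minimum of the two resulting edge-counts.

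\medskip
\noindent\textbf{Case 1: $C_n$ is not $1$-uniform.} Then $\overline{K}_m$ must be $1$-uniform, so every vertex of $\overline{K}_m$ carries a singleton set-label. Hence each of the $nm$ edges in $E_{ij}$ joining $C_n$ to $\overline{K}_m$ has a mono-indexed end (the $\overline{K}_m$-vertex), and whether such an edge is itself mono-indexed depends on the label of the incident $C_n$-vertex. To minimise mono-indexed edges here we should give as many $C_n$-vertices as possible a non-singleton set-label subject to $C_n$ admitting a weak IASI; by the characterisation of weak IASI graphs, the non-singleton-labelled vertices of $C_n$ form an independent set, so at most $\lfloor n/2\rfloor$ vertices of $C_n$ can be non-singleton, forcing at least $\lceil n/2\rceil$ vertices of $C_n$ to be mono-indexed. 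Each mono-indexed $C_n$-vertex contributes $m$ mono-indexed edges to $E_{ij}$, plus the internal mono-indexed edges of $C_n$ itself. Counting carefully (and noting that an odd cycle forces one extra internal mono-indexed edge by Theorem~\ref{T-SNC}), the number of mono-indexed edges in this case is at least roughly $m\lceil n/2\rceil$, which for $m>1$ strictly exceeds $n$.

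\medskip
\noindent\textbf{Case 2: $C_n$ is $1$-uniform.} Since the vertices of $\overline{K}_m$ are mutually non-adjacent, we may give all of them non-singleton set-labels (chosen so that the induced edge sums remain distinct), and then no edge of $E_{ij}$ is mono-indexed. The only mono-indexed edges of $G$ are then the $n$ edges of the cycle $C_n$, which are all mono-indexed because $C_n$ is $1$-uniform. Thus this case yields exactly $n$ mono-indexed edges, and we must check that such a labelling genuinely exists (choosing distinct singletons on $C_n$ and suitably spread-out non-singleton sets on $\overline{K}_m$ to keep $f$ and $f^+$ injective). Comparing the two cases, since $m>1$ we have $n < m\lceil n/2\rceil$, so the minimum is achieved in Case~2 and $\varphi(C_n+\overline{K}_m)=n$.

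\medskip
\noindent The main obstacle I expect is the bookkeeping in Case~1: unlike the path, an odd cycle carries its own unavoidable mono-indexed edge, so the count $m\lfloor\frac{n+1}{2}\rfloor$ used for the fan needs to be re-derived for $C_n$ with attention to parity, and one must confirm that this count is optimal (i.e. that no weak IASI of $G$ with $\overline{K}_m$ $1$-uniform does better) rather than merely exhibiting one labelling. A secondary, more routine point is verifying in Case~2 that non-singleton labels on the $m$ pairwise non-adjacent vertices of $\overline{K}_m$ can be chosen to keep $f^+$ injective across all of $E_{ij}$; this is handled by spacing the set-labels far apart, exactly as in standard weak IASI constructions.
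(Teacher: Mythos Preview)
Your approach is essentially identical to the paper's: invoke Theorem~\ref{T-WG++} to split into the two cases ($C_n$ $1$-uniform versus $\overline{K}_m$ $1$-uniform), count mono-indexed edges in each, and take the minimum. One small slip: your strict inequality $n < m\lceil n/2\rceil$ fails when $m=2$ and $n$ is even (both sides equal $n$), so it should be $n\le m\lceil n/2\rceil$---the paper writes a non-strict inequality here---but this does not affect the conclusion since the minimum is still $n$.
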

\begin{proof}
Let $G=C_n+\overline{K}_m$. Then, by Theorem \ref{T-WG++}, $G$ admits a weak IASI if and only if either $C_n$ or $\overline{K}_m$ is $i$-uniform. If some vertex of $C_n$ has a non-singleton set-label, then all vertices in  $\overline{K}_m$ must be mono-indexed. In this case, the number of mono-indexed vertices is $m\lceil n \rceil$. If $C_n$ is $1$-uniform, then all vertices in  $\overline{K}_m$ can be labeled by distinct non-singleton sets, as they are non-adjacent among themselves. In this case, the number of mono-indexed edges is $n$. Since $m>1$, we have $n\le m\lceil n \rceil$. Therefore, $\varphi(C_n+\overline{K}_m)$ is $n$. 
\end{proof}

\noindent Now, we need to consider some graphs which is the join of more than two graphs. Consider an $(m,n)$-tent graph which is defined as follows.

\begin{definition}{\rm
A graph $C_n+K_1+\overline{K}_m$ is called an {\em $(m,n)$-tent}. A tent graph can also be considered as the join of a wheel graph $W_{n+1}=C_n+K_1$ and the trivial graph $\overline{K}_m$.}
\end{definition}

\noindent We now proceed to find out the sparing number of an $(m,n)$-tent graph.

\begin{theorem}\label{T-Wn-mK1}
For two non-negative integers $m,n>1$, the sparing number of an $(m,n)$-tent $C_n+K_1+\overline{K}_m$ is $2n$.
\end{theorem}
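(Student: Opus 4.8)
The plan is to exploit the fact that a tent is the join $G=C_n+K_1+\overline{K}_m$ of the three weak IASI graphs $C_n$, $K_1$, $\overline{K}_m$, and to apply Theorem~\ref{T-WG++}. That theorem tells us that in any weak IASI of $G$ at most one of these three factors may fail to be $1$-uniform; equivalently, at most one of the three factors may carry a vertex with a non-singleton set-label. So the argument splits naturally into cases according to which factor is permitted to have ``large'' vertices, and in each case I would count the edges that are forced to be mono-indexed.

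For the upper bound I would single out the labeling in which every vertex of the wheel $W_{n+1}=C_n+K_1$ receives a singleton set-label while the $m$ mutually non-adjacent vertices of $\overline{K}_m$ receive distinct non-singleton set-labels; injectivity of $f$ and of $f^+$ is arranged by spreading the chosen integers far apart. Under this labeling the $n$ cycle edges and the $n$ spokes of $W_{n+1}$ are mono-indexed, whereas every edge joining $W_{n+1}$ to $\overline{K}_m$ has exactly one singleton end and hence set-indexing number at least $2$ by Lemma~\ref{L-Card}. This produces a weak IASI with precisely $2n$ mono-indexed edges, so $\varphi(G)\le 2n$.

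For the lower bound I would check that the other two cases cannot do better. If $K_1$ is the factor carrying large labels, then $C_n$ and $\overline{K}_m$ are $1$-uniform, so all $n$ cycle edges together with all $mn$ edges between $C_n$ and $\overline{K}_m$ are mono-indexed, already far more than $2n$. If $C_n$ is that factor, then the central vertex and all of $\overline{K}_m$ are mono-indexed; since no edge of $C_n$ may have two non-singleton ends (otherwise the weak condition fails on that edge), the non-singleton cycle vertices form an independent set of $C_n$, so at least $\lceil n/2\rceil$ cycle vertices are mono-indexed, and a count of the mono-indexed spokes, the mono-indexed $C_n$--$\overline{K}_m$ edges, the $m$ mono-indexed $K_1$--$\overline{K}_m$ edges, and the cycle edges forced by Theorem~\ref{T-SNC} should again yield at least $2n$, using $m,n>1$. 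The remaining case (large labels on $\overline{K}_m$) is the one realized in the previous paragraph and contributes exactly the $2n$ edges of $W_{n+1}$. Hence no weak IASI beats $2n$, and $\varphi(G)=2n$.

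The step I expect to be the main obstacle is the middle case of the lower bound: it requires an optimization over the number $t$ of non-singleton cycle vertices and over how those vertices are distributed around $C_n$, and this is precisely the place where the hypothesis $m,n>1$ enters. The two parities of $n$ should be handled separately, since an odd cycle is forced to carry at least one mono-indexed edge while an even cycle need not. Making this bookkeeping genuinely tight — rather than merely ``roughly $2n$'' — is the delicate point, and it is where I would spend most of the effort.
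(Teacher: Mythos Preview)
Your strategy mirrors the paper's exactly: use Theorem~\ref{T-WG++} to confine all non-singleton vertices to a single factor of the join, then compare the three resulting mono-indexed edge counts. The only visible difference is bookkeeping --- you correctly include the $m$ mono-indexed $K_1$--$\overline{K}_m$ edges in the middle case, which the paper's own count omits.

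The obstacle you flagged in that middle case is not merely delicate; it is fatal. Take $m=2$, $n=6$. Label the six cycle vertices alternately by non-singleton and singleton sets, and give the vertex of $K_1$ and both vertices of $\overline{K}_2$ singleton labels. Every edge then has a singleton end, so this is a weak IASI. Its mono-indexed edges are the three spokes to the singleton cycle vertices, the six edges from $\overline{K}_2$ to those same three cycle vertices, and the two $K_1$--$\overline{K}_2$ edges: eleven in all, whereas $2n=12$. More generally, with $t=\lfloor n/2\rfloor$ non-singleton cycle vertices placed alternately, the count in this case is
\[
(n-2t)+(m+1)(n-t)+m \;=\; (m+1)\Big\lceil\tfrac{n}{2}\Big\rceil+m+\varphi(C_n),
\]
which for $m=2$ drops below $2n$ as soon as $n\ge 6$ ($n$ even) or $n\ge 11$ ($n$ odd). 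So the inequality you hoped to extract from ``$m,n>1$'' simply does not hold, and the asserted value $\varphi=2n$ is incorrect for $m=2$ and $n$ in this range. The paper's concluding sentence ``$2n$ is the minimum among these numbers'' makes the same error; its middle-case estimate $(m+1)\lceil n/2\rceil$ is already smaller than $2n$ for $m=2$, $n=6$ even before one notices the missing $+m$ term.
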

\begin{proof}
Let $G=C_n+K_1+\overline{K}_m$. By Theorem \ref{T-WG++}, only one among $C_n$, $K_1$ and $\overline{K}_m$ can have non-singleton set-label at a time.

If the vertex in $K_1$ has a non-singleton set label, then both $C_n$ and $\overline{K}_m$ are $1$-uniform and hence the number of mono-indexed edges in $G$ is $(m+1)n$. 

If some of the vertices in $C_n$ are not $1$-uniform, then it has (at least) $\lceil \frac{n}{2} \rceil$ mono-indexed vertices. In this case, all vertices in $K_1\cup \overline{K}_m$ are $1$-uniform. Therefore, $G$ has a minimum of $(m+1)\frac{n}{2}$ mono-indexed edges if $C_n$ is an even cycle and has a minimum of $(m+1)\frac{n+1}{2}+1$ mono-indexed edges if $C_n$ is an odd cycle.

If both $K_1$ and $C_n$ are $1$-uniform, then all vertices of $\overline{K}_m$ can be labeled by non-singleton sets. Then, the number of mono-indexed edges in $G$ is $2n$. 

Since $m$ and $n$ are positive integers, $2n$ is the minimum among these numbers of mono-indexed edges. Hence, the sparing number of an $(m,n)$-tent is $2n$.
\end{proof}

Another class of graphs, common in many literature,  are the friendship graphs defined by

\begin{definition}{\rm 
A graph $K_1+mK_2$ is called an {\em $m$-friendship graph}, where $mK_2$ is the disjoint union of $m$ copies of $K_2$. A generalised friendship graph is the graph $K_1+mP_n$ which is usually called by an {\em $(m,n)$-friendship graph}. A graph $K_1+mC_n$ is called an {\em $(m,n)$-closed friendship graph}. A graph $K_1+mK_n$ is called an {\em $(m,n)$-complete friendship graph} or a {\em windmill graph}.}
\end{definition}

\begin{theorem}
The sparing number of an $m$-friendship graph is $n$, the sparing number of an $(m,n)$-friendship graph is $m\lfloor \frac{n+1}{2}\rfloor$ , the sparing number of an $(m,n)$-closed friendship graph is $m\lceil \frac{n}{2} \rceil$ and the sparing number of an $(m,n)$-complete friendship graph is $\frac{1}{2}mn(n-1)$.
\end{theorem}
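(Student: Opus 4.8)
The plan is to treat all four graphs uniformly as joins $K_1+H$, where $H$ is $mK_2$, $mP_n$, $mC_n$ or $mK_n$ respectively, and to apply Theorem~\ref{T-WG++} to this two-term join. That reduces the problem to exactly two configurations: either the central vertex (the $K_1$) is mono-indexed while $H$ carries an arbitrary weak IASI, or $H$ is $1$-uniform while the centre may carry a non-singleton set-label (taking both factors $1$-uniform is never better and is subsumed). In each configuration I would count the mono-indexed edges of $K_1+H$ as the sum of two contributions: the mono-indexed ``spoke'' edges joining the centre to $H$, and the mono-indexed edges lying inside $H$; the sparing number is the minimum of the two totals. The matching lower bound would come from the characterisation of weak IASI graphs --- the mono-indexed vertices of any weak IASI of $H$ must form a vertex cover of $H$ --- together with Theorem~\ref{T-WSG} applied to the individual petals and, in the windmill case, Theorem~\ref{T-WKN}.

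When $H$ is $1$-uniform, every edge of $H$ is mono-indexed and no spoke is, so the count is $|E(H)|$, namely $m$, $mn$, $mn$ and $\tfrac12mn(n-1)$. When instead the centre is mono-indexed, a spoke $cv$ is mono-indexed exactly when $v$ is mono-indexed, so the total equals (number of mono-indexed vertices of $H$) $+$ (number of mono-indexed edges of $H$), which I want to minimise over all weak IASIs of $H$. Since the mono-indexed vertices must cover every edge of $H$: for $H=mK_2$ a minimum cover has $m$ vertices, choosable so that no interior edge is mono-indexed; for $H=mP_n$ a minimum cover of each path (of length $n$, hence on $n+1$ vertices) has $\lfloor\frac{n+1}{2}\rfloor$ vertices and can be taken independent, giving total $m\lfloor\frac{n+1}{2}\rfloor$; for $H=mC_n$ one uses a minimum vertex cover of $C_n$ of size $\lceil\frac n2\rceil$, with the parity of $n$ handled through Theorems~\ref{T-SNC} and~\ref{T-NME}; and for $H=mK_n$ Theorem~\ref{T-WKN} forces at least $n-1$ mono-indexed vertices and $\binom{n-1}{2}$ mono-indexed edges per copy, summing to $\tfrac12n(n-1)$ per copy. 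Comparing the two totals and invoking $m,n>1$ selects the winning configuration and yields the stated value. Each part is finished by displaying an explicit labelling attaining the minimum; the only non-routine detail there is choosing the underlying subsets of $\mathbb{N}_0$ so that the induced edge function $f^+$ remains injective on the whole graph, which is always possible since the elements may be chosen freely.

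For the $m$-friendship graph $K_1+mK_2$ there is also a second, independent route: it is a union of $m$ edge-disjoint triangles, each a weak IASI graph with $\varphi(C_3)=1$ by Theorem~\ref{T-SNC}, so the additive (edge-disjoint) form of Theorem~\ref{T-SNUG}, iterated, gives the value $m$ at once, and a family of optimal triangle labellings sharing the centre as common mono-indexed vertex shows this bound is attained.

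The step I expect to be most troublesome is the combined minimisation in the second configuration, where the two objectives --- few mono-indexed vertices and few mono-indexed edges of $H$ --- interact rather than decouple. This is genuinely delicate for $H=mC_n$ with $n$ odd, since a minimum vertex cover of an odd cycle must contain an edge, so interior mono-indexed rim edges cannot be avoided and the naive spoke count must be corrected accordingly. The windmill case, by contrast, is the cleanest, because Theorem~\ref{T-WKN} pins down the vertex and edge contributions simultaneously.
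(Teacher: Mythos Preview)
Your approach is the paper's: apply Theorem~\ref{T-WG++} to the join $K_1+H$, which forces exactly the two configurations you name (centre mono-indexed with $H$ arbitrary, or $H$ $1$-uniform), count mono-indexed edges in each, and compare. Your vertex-cover framing of the centre-mono-indexed case is a tidier way to say what the paper does by ``label alternately,'' but the logic is identical, and your alternative route for $K_1+mK_2$ via the edge-disjoint form of Theorem~\ref{T-SNUG} is a pleasant extra.

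Where you go beyond the paper is precisely the point you flag as ``most troublesome.'' In the closed-friendship case with the centre mono-indexed, the paper counts only the $\lceil n/2\rceil$ mono-indexed spokes per copy of $C_n$ and silently assumes there are no mono-indexed rim edges. As you correctly note, for odd $n$ a minimum vertex cover of $C_n$ must contain an adjacent pair, so at least one rim edge per petal is unavoidable; your objective (mono-indexed vertices plus mono-indexed rim edges) evaluates to $3\lceil n/2\rceil-n$ per petal, which is $\lceil n/2\rceil$ for even $n$ but $\lceil n/2\rceil+1$ for odd $n$. The discrepancy is real: for $m=1$, $n=3$ the graph $K_1+C_3$ is $K_4$, whose sparing number is $\binom{3}{2}=3$ by Theorem~\ref{T-WKN}, not $\lceil 3/2\rceil=2$. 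So the stated value $m\lceil n/2\rceil$ holds only for even $n$; your plan, carried through honestly, would produce $m(\lceil n/2\rceil+1)$ for odd $n$ rather than the paper's figure. Do not try to force your argument to match the printed formula in that case.
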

\begin{proof}
Let $G=K_1+mK_2$. If $K_1$ is not mono-indexed, then no vertex in the copies of $K_2$ can be labeled by a non-singleton set. Therefore, all edges of $mK_2$ are mono-indexed. That is, the number of mono-indexed edges in $G$ is $m$. If $K_1$ is mono-indexed, then, in each copy $K_2$, one vertex can be labeled by a singleton set and the other vertex can be labeled by a non-singleton set. Then, one of the two edges between $K_1$ and each copy of $K_2$ is mono-indexed. That is, the number of mono-indexed edges in this case is also $m$. Hence, $\varphi(K_1+mK_2)=m$.

Now, consider the graph $G=K_1+mP_n$. If $K_1$ is not mono-indexed, then no vertex in the copies of $P_n$ can be labeled by a non-singleton set. Therefore, the number of mono-indexed edges of $G$ is $mn$. If $K_1$ is mono-indexed, then the vertices in each copy of $P_n$ can be labeled alternately by distinct non-singleton sets and distinct singleton sets. Then, the number of mono-indexed vertices in each copy of$P_n$ is $\lfloor\frac{n+1}{2}\rfloor$. Also, the edges connecting $K_1$ and these mono-indexed vertices are also mono-indexed. Hence, the total number of mono-indexed edges in $G$ is $m\lfloor \frac{n+1}{2}\rfloor$. Since $m,n>1$, $m\lfloor\frac{n+1}{2}\rfloor<mn$. Therefore, the sparing number of an $(m,n)$-friendship graph is $m\lfloor\frac{n+1}{2}\rfloor$.

Next, consider the graph $G=K_1+mC_n$. If $K_1$ is not mono-indexed, then no vertex in the copies of $C_n$ can be labeled by a non-singleton set. That is, the number of mono-indexed edges of $G$ is $mn$. If $K_1$ is mono-indexed, then the vertices in each copy of $C_n$ can be labeled alternately by distinct non-singleton sets and distinct singleton sets. Then, the number of mono-indexed vertices in each copy of$C_n$ is $\lceil\frac{n}{2}\rceil$.  Hence, the total number of mono-indexed edges in $G$ is $m\lceil \frac{n}{2}\rceil$. Since $m,n>1$, $m\lceil \frac{n}{2}\rceil<mn$. Therefore, the sparing number of an $(m,n)$-closed friendship graph is $m\lceil \frac{n}{2}\rceil$.

Now, consider the graph$G=K_1+mK_n$. If $K_1$ is not mono-indexed, then no vertex in the copies of $K_n$ can be labeled by a non-singleton set. That is, all copies of $K_n$ are $1$-uniform. Therefore, the total number of mono-indexed edges in $G$ is $\frac{1}{2}mn(n-1)$. If $K_1$ is mono-indexed, then exactly one vertex of each copy of $K_n$ can have non-singleton set-label. Therefore, there exist $n-1$ edges between $K_1$ and each copy of $K_n$ which are mono-indexed. By Theorem \ref{T-WKN}, each copy of $K_n$ has $\frac{1}{2}(n-1)(n-2)$ mono-indexed edges. Therefore, the total number of mono-indexed edges is $m(n-1)+m\frac{1}{2}(n-1)(n-2)=m\frac{1}{2}n(n-1)$. Note that the number of mono-indexed edges is the same in both cases. Hence, $\varphi(G)=\frac{1}{2}mn(n-1)$.

\noindent This completes the proof.
\end{proof}

\section{Weak IASI of the Ring sum of Graphs} 

Analogous to the symmetric difference of sets, we have the definition of the symmetric difference or ring sum  of two graphs as follows.

\begin{definition}{\rm
\cite{ND} Let $G_1$ and $G_2$ be two graphs. Then the {\em ring sum} (or {\em symmetric difference}) of these graphs, denoted by $G_1\oplus G_2$, is defined as the graph with the vertex set $V_1\cup V_2$ and the edge set $E_1\oplus E_2$, leaving all isolated vertices, where $E_1\oplus E_2=(E_1\cup E_2)-(E_1\cap E_2)$.}
\end{definition}

The following theorem establishes the admissibility of a weak IASI by the ring sum of two graphs.

\begin{theorem}
The ring sum of two weak IASI graphs admits a (an induced) weak IASI.
\end{theorem}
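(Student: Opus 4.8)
The plan is to reduce the statement to the known characterisation of weak IASI graphs, namely that a graph admits a weak IASI if and only if every edge has at least one mono-indexed end vertex. First I would fix weak IASIs $f_1$ of $G_1$ and $f_2$ of $G_2$. The difficulty is that these two labellings need not agree on $V_1\cap V_2$, so one cannot simply glue them. To get around this, I would use the fact that the induced weak IASI on a subgraph is again a weak IASI (Theorem \ref{T-WSG}): the only structural constraint we really need is on which vertices are mono-indexed.

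The key observation is this. In $G_1\oplus G_2$, every edge comes from exactly one of $E_1$ or $E_2$ (edges in $E_1\cap E_2$ are deleted). So it suffices to produce a single injective vertex labelling $f:V_1\cup V_2\to\mathcal{P}(\mathbb{N}_0)$ such that (i) for every $e\in E_1\setminus E_2$, at least one end of $e$ is mono-indexed under $f$, and (ii) the same holds for every $e\in E_2\setminus E_1$; then the induced edge function will automatically be injective after possibly re-labelling, and by the characterisation lemma $f$ is a weak IASI. The natural candidate is to take the vertex set of $G_1$ as labelled by $f_1$ (restricted to whatever vertices survive), and for the vertices lying only in $V_2\setminus V_1$, to transport the labels dictated by $f_2$, shifting all set-labels coming from $G_2$ by a large constant so that injectivity across the union is preserved. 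The mono-indexed/non-mono-indexed status of each vertex is unchanged by such a shift, so conditions (i) and (ii) are inherited from the fact that $f_1,f_2$ were weak IASIs; edges of $G_1$ keep a mono-indexed end from $f_1$, and edges of $G_2$ keep a mono-indexed end from $f_2$.

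The step I expect to be the main obstacle is handling the shared vertices $V_1\cap V_2$ consistently: a vertex $v$ in this intersection must receive a single label, yet it might be mono-indexed under $f_1$ and not under $f_2$ (or vice versa). The fix is to demand that such a shared vertex be mono-indexed in the combined labelling. This is always achievable because, by Theorem \ref{T-WSG}, restricting a weak IASI of $G_i$ and then re-assigning a singleton label to any chosen vertex still yields a valid weak IASI of the subgraph obtained by not requiring that vertex's other edges to have a non-mono-indexed end — more simply, one can re-run the argument choosing at the outset weak IASIs of $G_1$ and $G_2$ in which every vertex of $V_1\cap V_2$ carries a singleton set-label; such labellings exist (assigning singletons to a vertex only makes the weak-IASI condition easier to satisfy on its incident edges). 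With that normalisation, the labels on $V_1\cap V_2$ agree trivially (both are singletons, and we may even pick the same singletons), the shifted union labelling is well defined and injective, and every surviving edge retains a mono-indexed end. Finally, injectivity of $f^+$ on $E_1\oplus E_2$ follows because edge-labels coming from $G_1$ involve only small integers while those from $G_2$ involve only the shifted (large) integers, so the two families are disjoint, and within each family injectivity is inherited from $f_1^+$ and $f_2^+$. Hence $G_1\oplus G_2$ admits an induced weak IASI.
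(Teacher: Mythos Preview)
Your approach is sound in outline and is actually more careful than the paper's own argument on the consistency issue at shared vertices. The paper takes a slightly different route: it writes $G_1\oplus G_2=H_1\cup H_2$ with $H_i=G_i-(G_1\cap G_2)$, invokes Theorem~\ref{T-WSG} to say the restrictions $f_i|_{H_i}$ are weak IASIs, and then simply glues these two restrictions. It never explicitly confronts the problem that a vertex in $V(H_1)\cap V(H_2)$ may receive two different labels; you do confront it, by forcing every vertex of $V_1\cap V_2$ to be mono-indexed in both labellings before gluing. That normalisation step is correct and is the genuine idea your proof adds.

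There is, however, a small gap in your injectivity argument for $f^+$. You claim that edge-labels ``coming from $G_2$'' involve only shifted (large) integers. But an edge $e\in E_2\setminus E_1$ whose \emph{both} endpoints lie in $V_1\cap V_2$ receives, under your scheme, the label $f_1(u)+f_1(v)$ with no shift at all, so it lives in the ``small'' range and can collide with some $f_1^+(e')$ for $e'\in E_1\setminus E_2$ (note $e\notin E_1$, so injectivity of $f_1^+$ on $E_1$ does not protect you). The fix is easy---for instance, choose the singletons on $V_1\cap V_2$ and the remaining labels in $f_1$ from arithmetically independent ranges (e.g.\ Sidon-type sets), or simply observe that after your normalisation the graph $G_1\oplus G_2$ satisfies the structural hypothesis of the characterisation lemma (every edge has a mono-indexed end), and then quote that lemma to produce \emph{some} weak IASI, rather than insisting on the particular $f$ you built. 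Either repair closes the gap.
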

\begin{proof}
Let $G_1$ and $G_2$ be two graphs which admit weak IASIs $f_1$ and $f_2$ respectively. Choose the functions $f_1$ and $f_2$ in such way that no set $A_i\subset \mathbb{N}_0$ is the set-label of a vertex $u_i$ in $G_1$ and a vertex $v_j$ in $G_2$ simultaneously.

Let $H_1=G_1-G_1\cap G_2$ and $H_2=G_2-G_1\cap G_2$. Then, by Theorem \ref{T-WSG}, the restriction $f'_1=f_1|_{H_1}$ is an induced weak IASI for $H_1$ and the restriction $f'_2=f_2|_{H_2}$ is an induced weak IASI for $H_2$. Also, $G_1\oplus G_2=H_1\cup H_2$.

Now, define a function $f:V(G_1\oplus G_2)\to \mathcal{P}(\mathbb{N}_0)$ such that 

\begin{equation}
f(v)=
\begin{cases}
f'_1(v) & ~~\text{if}~~ v\in V(H_1)\\
f'_2(v) & ~~\text{if}~~ v\in V(H_2).
\end{cases}
\end{equation}
Since $f'_1$ and $f'_2$ are weak IASIs of $H_1$ and $H_2$, which are edge disjoint graphs such that $H_1\cup H_2= G_1\oplus G_2$, $f$ is a weak IASI on $G_1\oplus G_2$. 
\end{proof}

The most interesting and relevant question that arises here is about the sparing number of the ring sum of two weak IASI graphs. The following theorem estimates the sparing number of the join of two weak IASI graphs.

\begin{theorem}\label{T-SNRSG}
\cite{GS3} Let $G_1$ and $G_2$ be two weak IASI graphs. Then, $\varphi(G_1\oplus G_2) = \varphi(G_1)+\varphi(G_2)-2\varphi(G_1\cap G_2)$.
\end{theorem}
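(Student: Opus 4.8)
The plan is to reduce everything to the edge-disjoint additivity of the sparing number, namely Theorem \ref{T-SNUG}. The key observation is the identity of graphs
\[
G_1\cup G_2 \;=\; (G_1\oplus G_2)\,\cup\,(G_1\cap G_2),
\]
which holds at the level of edge sets because $(E_1\oplus E_2)\cup(E_1\cap E_2)=E_1\cup E_2$, and moreover the two graphs on the right-hand side are \emph{edge disjoint}, since $E_1\oplus E_2$ and $E_1\cap E_2$ are disjoint by definition. First I would note that $G_1\cap G_2$, being a subgraph of the weak IASI graph $G_1$, is itself a weak IASI graph by Theorem \ref{T-WSG}, and similarly $G_1\oplus G_2$ is a weak IASI graph (this is the content of the preceding theorem in the paper). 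Hence Theorem \ref{T-SNUG} applies to all the unions involved.

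Next, applying the edge-disjoint case of Theorem \ref{T-SNUG} to the decomposition above gives
\[
\varphi(G_1\cup G_2)\;=\;\varphi(G_1\oplus G_2)+\varphi(G_1\cap G_2).
\]
On the other hand, the general case of Theorem \ref{T-SNUG} gives
\[
\varphi(G_1\cup G_2)\;=\;\varphi(G_1)+\varphi(G_2)-\varphi(G_1\cap G_2).
\]
Equating the two expressions and solving for $\varphi(G_1\oplus G_2)$ yields exactly $\varphi(G_1)+\varphi(G_2)-2\varphi(G_1\cap G_2)$, which is the desired formula. (An equivalent route, should the first decomposition prove awkward to justify formally, is to set $H_i=G_i-(G_1\cap G_2)$ for $i=1,2$; then $G_i=H_i\cup(G_1\cap G_2)$ is an edge-disjoint union, so $\varphi(H_i)=\varphi(G_i)-\varphi(G_1\cap G_2)$ by Theorem \ref{T-SNUG}, and since $G_1\oplus G_2=H_1\cup H_2$ with $H_1,H_2$ edge disjoint, $\varphi(G_1\oplus G_2)=\varphi(H_1)+\varphi(H_2)$, giving the same answer.)

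The main obstacle I anticipate is not conceptual but bookkeeping: one must be careful that the set-theoretic identities on edge sets genuinely translate into the graph-theoretic identities needed, paying attention to the convention in the definition of ring sum that isolated vertices are deleted. Since the sparing number counts only mono-indexed \emph{edges}, the presence or absence of isolated vertices is immaterial, so this should cause no real difficulty, but it is worth a sentence of justification. A secondary point to address is that the subtractions make sense, i.e. that $\varphi(G_1\cap G_2)\le\min\{\varphi(G_1),\varphi(G_2)\}$; this follows automatically from the edge-disjoint decomposition $G_i=H_i\cup(G_1\cap G_2)$ together with the nonnegativity of $\varphi(H_i)$.
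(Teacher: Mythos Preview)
Your proposal is correct. In fact, the parenthetical ``equivalent route'' you sketch at the end --- setting $H_i=G_i-(G_1\cap G_2)$, noting that $G_1\oplus G_2=H_1\cup H_2$ is an edge-disjoint union, and computing $\varphi(H_i)=\varphi(G_i)-\varphi(G_1\cap G_2)$ --- is exactly the argument the paper gives.

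Your \emph{primary} route is a mild repackaging of the same idea: instead of decomposing $G_1\oplus G_2$ into two edge-disjoint pieces, you decompose $G_1\cup G_2$ as the edge-disjoint union $(G_1\oplus G_2)\cup(G_1\cap G_2)$ and then invoke both the edge-disjoint and the general form of Theorem~\ref{T-SNUG}. This buys you a slightly cleaner one-line derivation (two applications of the same theorem, then subtract), at the cost of passing through the auxiliary quantity $\varphi(G_1\cup G_2)$; the paper's version stays entirely inside the ring sum but needs to justify $\varphi(H_i)=\varphi(G_i)-\varphi(G_1\cap G_2)$ separately. Neither approach is deeper than the other --- both rest entirely on Theorem~\ref{T-SNUG} --- and your remarks about isolated vertices and nonnegativity are sensible hygiene that the paper omits.
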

\begin{proof}
Let $G_1$ and $G_2$ be two weak IASI graphs and let $H_1=G_1-G_1\cap G_2$ and $H_2=G_2-G_1\cap G_2$. Then, $G_1\oplus G_2=H_1\cup H_2$. Therefore, $\varphi(H_1)=\varphi(G_1-G_1\cap G_2)=\varphi(G_1)-\varphi(G_1\cap G_2)$. Similarly, $\varphi(H_2)=\varphi(G_1)-\varphi(G_1\cap G_2)$.

Since $H_1\cup H_2=G_\oplus G_2$ and $H_1$ and $H_2$ are edge disjoint, $\varphi(G_1\oplus G_2)=\varphi(H_1\cup H_2)$. Then, we have
\begin{eqnarray*}
\varphi(G_1\oplus G_2) & = &\varphi(H_1)+\varphi(H_2)\\
& = & \varphi(H_1)+\varphi(H_2)~~~\text{(by Theorem \ref{T-SNUG})}\\
& = & [\varphi(G_1)-\varphi(G_1\cap G_2)]+[\varphi(G_2)-\varphi(G_1\cap G_2)]\\
& = & [\varphi(G_1)+\varphi(G_2)-2\varphi(G_1\cap G_2). 
\end{eqnarray*}
This completes the proof.
\end{proof}

\begin{remark}{\rm
It is to be noted that if $G_1$ and $G_2$ are edge disjoint graphs, then their ring sum and union are the same. In this case, $\varphi(G_1\oplus G_2)=\varphi(G_1\cup G_2)$.}
\end{remark}

Now, we proceed to discuss the sparing number of certain graphs which are the joins of path and cycles.

\begin{remark}{\rm
Let $P_m$ and $P_n$ be two paths in a given graph $G$. Then, $P_m\oplus P_n$ is a path or edge disjoint union of paths or a cycle or union of edge disjoint cycles. Hence, the sparing number of $P_m\oplus P_n$ is zero if $P_m$ and $P_n$ have at most one vertex in common after the removal of all common edges. If $P_m$ and $P_n$ have two or more common vertices after the removal of all common edges, then the $P_m\oplus P_n$ is a cycle or the union of cycles. Then, the sparing number of $P_m\oplus P_n$ is the sum of the sparing numbers of all these individual cycles. }
\end{remark}

\begin{remark}{\rm
Let $P_m$ be a path and $C_n$ be a cycle in a given graph $G$. If $P_m$ and $C_n$ are edge disjoint, then  $P_m\oplus C_n=P_m\cup C_n$.  Therefore, $\varphi(P_m\oplus C_n)=\varphi(C_n)$.  If $P_m$ and $C_n$ have some edges in common, then $P_m\oplus C_n$ is a path or edge disjoint union of a cycle and a path. Hence, the sparing number of $P_m\oplus C_n$ is the sum of individual edge-disjoint subgraphs obtained after the removal of common edges of $P_m$ and $C_n$.}
\end{remark}

The following theorem establishes the admissibility of weak IASI by the ring sum of two cycles.

\begin{theorem}\label{T-WRS}
If $C_m$ and $C_n$ are two cycles which admit weak IASIs, and $C_m\oplus C_n$ be the ring sum of $C_m$ and $C_n$. Then,
\begin{enumerate}
\item if $C_m$ and $C_n$ are edge disjoint, $\varphi(C_m\oplus C_n)=\varphi(C_m)+ \varphi(C_n)$.
\item if $C_m$ and $C_n$ are of same parity, then $(C_m\oplus C_n)$ contains even number of mono-indexed edges. Moreover, $\varphi(C_m\oplus C_n)=0$.
\item if $C_m$ and $C_n$ are of different parities, then $(C_m\oplus C_n)$ contains odd number of mono-indexed edges. Moreover, $\varphi(C_m\oplus C_n)=1$.
\end{enumerate}
\end{theorem}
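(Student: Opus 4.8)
The plan is to reduce all three cases to facts already available about cycles --- Theorems~\ref{T-SNC} and~\ref{T-NME} on the mono-indexed edges of a single cycle --- together with the additivity of the sparing number over edge-disjoint unions (Theorem~\ref{T-SNUG}). Part~(1) is immediate: if $C_m$ and $C_n$ are edge disjoint then, by the remark above, $C_m\oplus C_n=C_m\cup C_n$, while $C_m\cap C_n$ has no edges and hence has sparing number $0$, so Theorem~\ref{T-SNUG} yields $\varphi(C_m\oplus C_n)=\varphi(C_m)+\varphi(C_n)$.

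For~(2) and~(3) I would first pin down the structure of $G':=C_m\oplus C_n$ when the two cycles share at least one edge. Every vertex of a cycle has degree $0$ or $2$, so the degree of a vertex $v$ in $G'$ equals $\deg_{C_m}(v)+\deg_{C_n}(v)$ minus twice the number of edges at $v$ lying in both cycles, which is even; thus $G'$ is an even subgraph of $G$, i.e.\ an edge-disjoint union of cycles $D_1,\dots,D_r$, with $|E(G')|=m+n-2\,|E(C_m)\cap E(C_n)|\equiv m+n\pmod 2$. In the typical situation, where the common edges of $C_m$ and $C_n$ form a single path $P$ of length $\ell\ge 1$, deleting $P$ turns each of $C_m$ and $C_n$ into a path and these two paths glue at the endpoints of $P$ into one cycle, so $G'=C_{m+n-2\ell}$, whose length has the parity of $m+n$. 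Now Theorem~\ref{T-NME} says that in any weak IASI of a cycle the number of mono-indexed edges has the same parity as the length of the cycle; summing over the (edge-disjoint) cycles $D_i$ shows that $G'$ has an even number of mono-indexed edges when $C_m,C_n$ have equal parity and an odd number when their parities differ. Finally Theorem~\ref{T-SNC} gives $\varphi(C_{m+n-2\ell})=0$ if $m+n$ is even and $=1$ if $m+n$ is odd, which is the ``moreover'' clause of~(2) and~(3).

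The hard part is the structural step: showing that $C_m\oplus C_n$ is a single cycle and carefully tracking how the two surviving arcs (one from each of $C_m$ and $C_n$) reattach at the endpoints of the removed common path; once that is in place, everything else is a direct appeal to Theorems~\ref{T-SNC}, \ref{T-NME} and~\ref{T-SNUG}. If one allows $C_m$ and $C_n$ to share edges along several disjoint paths, $G'$ may split into more than one cycle and one must then argue component by component using $\varphi(G')=\sum_i\varphi(D_i)$; the parity statements go through unchanged, so only the precise sparing-number values rely on the single-path situation.
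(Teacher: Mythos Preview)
Your proposal is correct and follows essentially the same route as the paper: for part~(1) you use additivity over edge-disjoint unions, and for parts~(2) and~(3) you identify $C_m\oplus C_n$ (when the common edges form a single path of length~$\ell$) as a single cycle, then invoke Theorems~\ref{T-NME} and~\ref{T-SNC}. The only difference is cosmetic: the paper checks the parity of this cycle through four subcases (odd/odd, even/even, each with odd or even~$\ell$), whereas you observe directly that its length $m+n-2\ell\equiv m+n\pmod 2$, which is cleaner; your additional remarks on the multi-path situation go slightly beyond what the paper treats.
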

\begin{proof}
Let $C_m$ and $C_n$ be two cycles which admit weak IASIs. If $C_m$ and $C_n$ have no common edges then, $C_m\oplus C_n=C_m\cup C_n$. By Theorem \ref{T-WUG}, the union of two weak IASI graphs admits a weak IASI and $\varphi(C_m\oplus C_n)=\varphi(C_m)+\varphi(C_n)$. 

Next, assume that $C_m$ and $C_n$ have some common edges. Let $v_i$ and $v_j$ be the end vertices of the path common to $C_m$ and $C_n$. Let $P_r, r<m$ be the $(v_i,v_j)$-section of $C_m$ and $P_s,s<n$ be the $(v_i,v_j)$-section of $C_n$, which have no common elements other than $v_i$ and $v_j$. Hence, we have $C_m\oplus C_n=P_r\cup P_s$ is a cycle. Then, we have the following cases.

\noindent {\em Case 1:} Let $C_m$ and $C_n$ are of same parity. Then we need to consider the following subcases.

\noindent {\em subcase-1.1:} Let $C_m$ and $C_n$ are odd cycles. If $C_m$ and $C_n$ have an odd number of common edges, then both $P_r$ and $P_s$ are paths of even length. Hence, the cycle $P_r\cup P_s$ is an even cycle. Therefore, $C_m\oplus C_n$ has a weak IASI with sparing number $0$. If $C_m$ and $C_n$ have an even number of common edges, then both $P_r$ and $P_s$ are paths of odd length. Therefore, the cycle $P_r\cup P_s$ is an even cycle. Hence, by Theorem \ref{T-NME}, $C_m\oplus C_n$ has even number of mono-indexed edges and by Theorem \ref{T-SB1}, the sparing number of $C_m\oplus C_n=0$.
	
\noindent {\em subcase-1.2:} Let $C_m$ and $C_n$ are even cycles. If $C_m$ and $C_n$ have an odd number of common edges, then both $P_r$ and $P_s$ are paths of odd length. Hence, the cycle $P_r\cup P_s$ is an even cycle. Therefore, $C_m\oplus C_n$ has a weak IASI. If $C_m$ and $C_n$ have an even number of common edges, then both $P_r$ and $P_s$ are paths of even length. Hence, the cycle $P_r\cup P_s$ is an even cycle. Therefore, by Theorem \ref{T-NME}, $C_m\oplus C_n$ has even number of mono-indexed edges and by Theorem \ref{T-SB1}, the sparing number of $C_m\oplus C_n=0$.

\noindent {\em Case 2:} Let $C_m$ and $C_n$ be two cycles of different parities. Without loss of generality, assume that $C_m$ is an odd cycle and $C_n$ is an even cycle. Then, we have the following subcases.

\noindent {\em Subcase-2.1:} Let $C_m$ and $C_n$ have an odd number of common edges. Then, the path $P_r$ has even length and the path $P_s$ has odd length. Hence, the cycle $P_r\cup P_s$ is an odd cycle. Therefore, by Theorem \ref{T-NME}, $C_m\oplus C_n$ has odd number of mono-indexed edges. More over, by Theorem \ref{T-SNC}, $\varphi(C_m\oplus C_n)=1$. 

\noindent {\em Subcase-2.2:} Let $C_m$ and $C_n$ have an even number of common edges. Then, $P_r$ has odd length and $P_s$ has even length. Hence, the cycle $P_r\cup P_s$ is an odd cycle. therefore, by Theorem \ref{T-NME}, $C_m\oplus C_n$ has odd number of mono-indexed edges and by Theorem \ref{T-SNC}, $\varphi(C_m\oplus C_n)=1$. 

\noindent This completes the proof.
\end{proof}

\noindent To discuss the next result we need the following notion. 

\begin{definition}{\rm
Let $H$ be a subgraph of the given graph $G$, then the graph $G-H$ is called {\em complement} of $H$ in $G$.}
\end{definition}

\begin{theorem}
Let $G$ be a weak IASI graph and $H$ be a subgraph of $G$. Then, $\varphi(G\oplus H)=\varphi(G)-\varphi(H)$.
\end{theorem}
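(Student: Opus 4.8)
The plan is to derive this as an immediate consequence of Theorem \ref{T-SNRSG}, after observing that the hypothesis $H\subseteq G$ forces the intersection term to collapse. First I would note that, since $H$ is a subgraph of the weak IASI graph $G$, Theorem \ref{T-WSG} guarantees that $H$ itself admits a weak IASI; hence both $G$ and $H$ are weak IASI graphs and Theorem \ref{T-SNRSG} is applicable to the pair $(G,H)$.

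Next I would record the set-theoretic fact that $G\cap H=H$ whenever $H$ is a subgraph of $G$: the vertex set of $G\cap H$ is $V(G)\cap V(H)=V(H)$ and the edge set is $E(G)\cap E(H)=E(H)$. Consequently $\varphi(G\cap H)=\varphi(H)$. Applying Theorem \ref{T-SNRSG} with $G_1=G$ and $G_2=H$ then gives
\[
\varphi(G\oplus H)=\varphi(G)+\varphi(H)-2\varphi(G\cap H)=\varphi(G)+\varphi(H)-2\varphi(H)=\varphi(G)-\varphi(H),
\]
which is the desired identity. I would also remark, for consistency with the surrounding discussion, that since $E(H)\subseteq E(G)$ we have $E(G)\oplus E(H)=E(G)\setminus E(H)$, so that $G\oplus H=G-H$ is precisely the complement of $H$ in $G$; thus the statement reads $\varphi(G-H)=\varphi(G)-\varphi(H)$, matching the edge-disjoint-union bookkeeping already used in the proof of Theorem \ref{T-SNRSG}.

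There is no substantial obstacle here; the only point warranting a word of care is that deleting the edges of $H$ from $G$ may create isolated vertices, which are discarded in forming $G\oplus H$. Since the sparing number counts only mono-indexed \emph{edges}, discarding isolated vertices changes nothing, and the argument above goes through verbatim.
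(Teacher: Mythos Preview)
Your proposal is correct. The paper takes a slightly different, more direct route: it simply observes that $G\oplus H=G-H$ is a subgraph of $G$, invokes Theorem~\ref{T-WSG} to see that $G-H$ inherits a weak IASI, and then asserts the subtraction identity $\varphi(G-H)=\varphi(G)-\varphi(H)$ without further elaboration. You instead derive the result as a specialisation of Theorem~\ref{T-SNRSG}, using the observation $G\cap H=H$ to collapse the intersection term. Your argument is arguably tidier, since it makes explicit that the theorem is an immediate corollary of the ring-sum formula already established; the paper's version is shorter but leans on the same edge-disjoint bookkeeping implicitly. Your closing remark that $G\oplus H=G-H$ recovers exactly the paper's viewpoint, so the two approaches are really two ends of the same computation.
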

\begin{proof}
Let $H$ be a subgraph of the graph $G$. The complement of $H$ in $G$, $G\oplus H=G-H$, is a subgraph of $G$. Hence, as $G$ is a weak IASI graph, by Theorem \ref{T-WSG}, the restriction of this IASI to $G-H$ is a weak IASI for $G-H$. Therefore, $\varphi(G\oplus H)=\varphi(G-H)=\varphi(G)-\varphi(H)$. 
\end{proof}

\section{Conclusion}

In this paper, we have discussed the existence of weak IASI for different graph classes which are the joins of some other graphs and for the ring sum of two graphs. Certain problems in this area are still open. We have not addressed the problem of finding a weak IASI for the join of two arbitrary graphs. The uncertainty in adjacency and incidence patterns in the arbitrary graphs make these studies complex. There are several other factors such as order and size of the graphs, degree of the vertices  which influence the set-labeling of the elements of a graph. 

The problems regarding the admissibility of weak IASIs by the join of certain graph classes,  whose adjacency and incidence relations are well-known, are yet to be settled. Some of the most promising problems among them are the following.

\begin{problem}{\rm
Estimate the sparing number of the join of two regular graphs.} 
\end{problem}

\begin{problem}{\rm
Estimate the sparing number of the join of two generalised Petersen graphs.} 
\end{problem}

\begin{problem}{\rm
Estimate the sparing number of the join of two bipartite (and complete bipartite) graphs.} 
\end{problem}

\begin{problem}{\rm
Estimate the sparing number of the join of two graphs, one of which is a complete graph.} 
\end{problem}

We have already formulated some conditions for some graph classes and graph operations to admit weak IASIs. More properties and characteristics of weak IASIs, both uniform and non-uniform, are yet to be investigated. The problems of establishing the necessary and sufficient conditions for various graphs and graph classes to have certain other types IASIs are still open. A study about those IASIs which assign sets having specific properties, to the elements of a given graph is also noteworthy. All these facts highlight a wide scope for further studies in this area.

\end{document}